\def\tr{{\rm tr}}
\begin{document}

\newtheorem{problem}{Problem}

\newtheorem{theorem}{Theorem}[section]
\newtheorem{corollary}[theorem]{Corollary}
\newtheorem{definition}[theorem]{Definition}
\newtheorem{conjecture}[theorem]{Conjecture}
\newtheorem{question}[theorem]{Question}
\newtheorem{lemma}[theorem]{Lemma}
\newtheorem{proposition}[theorem]{Proposition}
\newtheorem{quest}[theorem]{Question}
\newtheorem{example}[theorem]{Example}

\newenvironment{proof}{\noindent {\bf
Proof.}}{\rule{2mm}{2mm}\par\medskip}

\newenvironment{proofof3}{\noindent {\bf
Proof of  Theorem 1.2.}}{\rule{2mm}{2mm}\par\medskip}

\newenvironment{proofof5}{\noindent {\bf
Proof of  Theorem 1.3.}}{\rule{2mm}{2mm}\par\medskip}

\newcommand{\remark}{\medskip\par\noindent {\bf Remark.~~}}
\newcommand{\pp}{{\it p.}}
\newcommand{\de}{\em}

\title{  {Extensions of Fiedler-Markham's inequality and Thompson's inequality}
\thanks{  $\dag$ Corresponding author. 
 E-mail addresses: ytli0921@hnu.edu.cn (Y. Li), FairyHuang@csu.edu.cn (Y. Huang), 
fenglh@163.com (L. Feng),  wjliu6210@126.com (W. Liu).}  }

\author{Yongtao Li$^a$, Yang Huang$^{b}$, Lihua Feng$^b$,  Weijun Liu$^{\dag,b}$\\
{\small ${}^a$School of Mathematics, Hunan University} \\
{\small Changsha, Hunan, 410082, P.R. China } \\
{\small $^b$School of Mathematics and Statistics, Central South University} \\
{\small New Campus, Changsha, Hunan, 410083, P.R. China. } }

\maketitle

\vspace{-0.5cm}

\begin{abstract}

We  present some new inequalities related to determinant and trace 
for positive semidefinite block matrices by using symmetric tensor product, 
which are extensions of Fiedler-Markham's inequality and  Thompson's inequality.   
 \end{abstract}

{{\bf Key words:}   Positive semidefinite matrices; Fiedler and Markham's inequality; \\ 
Thompson's inequality.  } \\
{2010 Mathematics Subject Classication.  15A45, 15A60, 47B65.}

\section{Introduction}
Throughout the paper, we use the following standard notation. 
The set of $n\times n$ complex matrices is denoted by $\mathbb{M}_n(\mathbb{C})$, 
or simply by $\mathbb{M}_n$,  
and the identity matrix of order $n$ by  $I_n$, or $I$ for short. 
In this paper, 
we are interested in complex block matrices. Let $\mathbb{M}_n(\mathbb{M}_k)$ 
be the set of complex matrices partitioned into $n\times n$ blocks 
with each block being a $k\times k$ matrix. 
The element of $\mathbb{M}_n(\mathbb{M}_k)$ is usually written as ${ H}=[H_{ij}]_{i,j=1}^n$, 
where $H_{ij}\in \mathbb{M}_k$ for all $i,j$. 
By convention, if $X\in \mathbb{M}_n$ is positive semmidefinite, 
we write $X\ge 0$. For two Hermitian matrices $A$ and $B$ of the same size, 
$A\ge B$ means $A-B\ge 0$. 

Let $H=[H_{ij}]_{i,j=1}^n$ be positive semidefinite. 
It is well known that 
both $[\mathrm{det}\, H_{ij} ]_{i,j=1}^n$ and $[\mathrm{tr}\, H_{ij}]_{i,j=1}^n$ 
are positive semidefinite;  see, e.g., \cite{Zha12}. 
Moreover, the renowned Fischer's inequality (see \cite[p. 506]{HJ13} or \cite[p. 217]{Zhang11}) says that 
\begin{equation} \label{eqfis}
 \prod_{i=1}^n \det H_{ii} \ge \det H. 
\end{equation}
There are various extensions and generalizations of (\ref{eqfis}) in the literature, 
e.g., \cite{CTZ19, Eve58, SHGJ06, JM85,LD14,deP71}.
In 1961, Thompson \cite{Tho61} generalized Fischer's determinantal inequality as below (\ref{eq1}) 
by an identity of Grassmann products; see \cite{LS16} for a short proof.

\begin{theorem}\label{thm11} 
Let ${ H}=[H_{ij}]_{i,j=1}^n \in \mathbb{M}_n(\mathbb{M}_k)$ be positive semidefinite. Then \begin{eqnarray}\label{eq1}
	\det ([\det H_{ij}])\geq \det H.
	\end{eqnarray}
\end{theorem}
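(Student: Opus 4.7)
The plan is to follow Thompson's original approach via the $k$-th antisymmetric (Grassmann) tensor power. Identify $\mathbb{C}^{nk} \cong \mathbb{C}^n \otimes \mathbb{C}^k$ with tensor basis $\{e_i \otimes f_\alpha\}_{i\in[n],\alpha\in[k]}$, and for each $i \in \{1, \dots, n\}$ set
\[
\xi_i \;=\; (e_i \otimes f_1) \wedge (e_i \otimes f_2) \wedge \cdots \wedge (e_i \otimes f_k) \;\in\; \wedge^{k}\mathbb{C}^{nk}.
\]
Using the Gram identity $\langle u_1\wedge\cdots\wedge u_k, v_1\wedge\cdots\wedge v_k\rangle = \det[\langle u_i, v_j\rangle]$, I would first check that $\{\xi_1,\ldots,\xi_n\}$ is orthonormal in $\wedge^k \mathbb{C}^{nk}$ and that $\langle (\wedge^{k}H)\xi_i, \xi_j\rangle = \det H_{ji}$. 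Since $H \geq 0$ implies $\wedge^{k}H \geq 0$, the compression of $\wedge^{k}H$ to $U := \mathrm{span}\{\xi_1,\ldots,\xi_n\}$ has matrix $[\det H_{ij}]_{i,j=1}^n$ in the orthonormal basis $\{\xi_i\}$, so $[\det H_{ij}]$ is automatically positive semidefinite.

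Next I would exploit the top-degree wedge identity
\[
\det H \cdot (\xi_1 \wedge \cdots \wedge \xi_n) \;=\; (\wedge^{k}H\,\xi_1) \wedge (\wedge^{k}H\,\xi_2) \wedge \cdots \wedge (\wedge^{k}H\,\xi_n),
\]
which holds in $\wedge^{nk}\mathbb{C}^{nk}$ because $\xi_1 \wedge \cdots \wedge \xi_n$ is (up to sign) the standard volume form and $\wedge^{nk}H$ acts there as multiplication by $\det H$. Decomposing each $\wedge^{k}H\,\xi_i = \sum_j \det H_{ji}\,\xi_j + \eta_i$ with $\eta_i \in U^{\perp}$ and expanding the right-hand side, the ``pure-$U$'' contribution recovers exactly $\det[\det H_{ij}]$ when $k$ is odd; when $k$ is even, the sign of block permutations instead produces a permanent, at which point the symmetric-tensor framework developed later in this paper becomes the natural tool.

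The main obstacle is then to control the remaining mixed and all-$\eta$ terms in this expansion so as to confirm that $\det H \le \det[\det H_{ij}]$, i.e., that the correction has the correct sign. A cleaner route that avoids the parity issue is induction on $n$. The base case $n=2$ follows in a few lines: setting $A = H_{11}$, $B = H_{12}$, $C = H_{22}$ with $C > 0$, the Schur-complement identity gives $\det H = \det C \cdot \det(A - BC^{-1}B^*)$, and writing $A = (A - BC^{-1}B^*) + BC^{-1}B^*$ as a sum of two positive semidefinite matrices lets one apply the Minkowski-type bound $\det(X + Y) \geq \det X + \det Y$ (a consequence of the classical Minkowski determinantal inequality) to deduce $\det A \det C \geq \det H + |\det B|^2$, which is the claim for $n = 2$. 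For $n \geq 3$, applying the induction hypothesis to the Schur complement of $H$ with respect to $H_{nn}$ and comparing with the Schur-complement expansion of the scalar matrix $[\det H_{ij}]$ reduces matters to an $(n-1)\times(n-1)$ Hermitian positive semidefinite comparison; establishing this comparison is the genuinely delicate step, and it can be settled by invoking the Grassmann realization of the first two paragraphs on the principal submatrix $\bigl(\begin{smallmatrix} H_{11} & H_{1i}\\ H_{i1} & H_{ii}\end{smallmatrix}\bigr)$ at each relevant index pair.
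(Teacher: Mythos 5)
The paper does not prove Theorem \ref{thm11} at all: it is quoted from Thompson's 1961 paper, with Lin--Sra cited for a short proof, so your argument has to stand entirely on its own. Your opening is sound: $[\det H_{ij}]$ really is the compression of $\wedge^k H\ge 0$ to the orthonormal system $\xi_1,\dots,\xi_n$, and your observation that the naive expansion of $(\wedge^kH\xi_1)\wedge\cdots\wedge(\wedge^kH\xi_n)$ yields a permanent rather than a determinant for even $k$ (adjacent degree-$k$ elements commute up to $(-1)^{k^2}$) is correct --- this parity obstruction is exactly why the standard proofs work instead with the Gram identity for the decomposable vectors coming from a factorization $H=G^*G$. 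Your base case $n=2$ is complete and correct, modulo the routine perturbation making $H_{22}$ invertible.

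The genuine gap is the induction step. Writing $S=H-vH_{nn}^{-1}v^*$ for the Schur complement of $H$ with respect to $H_{nn}$, your scheme requires the Loewner-order comparison
\[
\Bigl[\det H_{ij}-\tfrac{\det H_{in}\det H_{nj}}{\det H_{nn}}\Bigr]_{i,j=1}^{n-1}\;\ge\;\bigl[\det S_{ij}\bigr]_{i,j=1}^{n-1},
\]
i.e.\ that the Schur complement of the scalar matrix $[\det H_{ij}]$ dominates the matrix of determinants of the block Schur complement. This is the entire content of the theorem beyond $n=2$, and you have not established it. The method you propose cannot work: positive semidefiniteness of an $(n-1)\times(n-1)$ Hermitian matrix is not detected by its $2\times 2$ principal submatrices (a Hermitian matrix with all $2\times2$ principal minors nonnegative need not be positive semidefinite), and the $2\times 2$ blocks you invoke do not even involve $H_{nn}$ or the off-diagonal blocks $H_{in}$ that enter both sides of the comparison. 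The inequality is in fact true, but proving it needs a real idea, e.g.: both sides are quadratic forms of the type $x\mapsto\inf_y\langle \wedge^kH(x+y),x+y\rangle$, the left side minimizing over $y\in\mathbb{C}\xi_n$ and the right side over the span of all basis wedges meeting the $n$-th block (using that $\wedge^k$ commutes with Schur complementation), so the left side, being an infimum over a smaller set, dominates. Without some such argument, what you have written proves only the case $n=2$.
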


Indeed, (\ref{eq1}) is a generalization of Fischer's result (\ref{eqfis}) 
since we can get by a special case of Fischer's inequality 
that $\prod_{i=1}^n \det H_{ii}\ge \det ([\det H_{ij}])$. 
In 1994, Fiedler and Markham \cite{FM94} revisited Thompson's result 
and proved the following inequality for trace.

\begin{theorem} \label{thm12}
Let ${ H}=[H_{ij}]_{i,j=1}^n \in \mathbb{M}_n(\mathbb{M}_k)$ be positive semidefinite. Then 
\begin{equation} \label{eqfm}
\left(\frac{\det \bigl( [\mathrm{tr} H_{ij}] \bigr)}{k}\right)^k \ge \det { H}.
\end{equation}
\end{theorem}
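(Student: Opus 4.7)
The plan is to reduce (\ref{eqfm}) to three classical tools---Minkowski's determinant inequality, the AM--GM inequality, and Fischer's inequality (\ref{eqfis})---after making one key observation about how the trace matrix $[\mathrm{tr}\, H_{ij}]$ decomposes in terms of principal submatrices of $H$.

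The starting observation is that $\mathrm{tr}(H_{ij}) = \sum_{l=1}^{k}(H_{ij})_{ll}$, which gives
\begin{equation*}
[\mathrm{tr}\, H_{ij}]_{i,j=1}^{n} \;=\; \sum_{l=1}^{k} H[l], \qquad \text{where } H[l] := \bigl[(H_{ij})_{ll}\bigr]_{i,j=1}^{n}.
\end{equation*}
Each $H[l]$ is a principal submatrix of $H$ obtained by selecting the $l$-th coordinate inside every block, hence $H[l]\ge 0$. Applying Minkowski's determinant inequality iteratively, followed by AM--GM, I would then deduce
\begin{equation*}
\det\bigl([\mathrm{tr}\, H_{ij}]\bigr)^{1/n} \;\ge\; \sum_{l=1}^{k}\bigl(\det H[l]\bigr)^{1/n} \;\ge\; k\Bigl(\prod_{l=1}^{k}\det H[l]\Bigr)^{1/(nk)}.
\end{equation*}
Finally, a simultaneous permutation of the rows and columns of $H$---grouping the intra-block coordinate $l$ first---exhibits $H$ as an $nk\times nk$ positive semidefinite matrix whose $n\times n$ diagonal blocks are exactly $H[1],\dots,H[k]$. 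Fischer's inequality (\ref{eqfis}) then yields $\prod_{l=1}^{k}\det H[l] \ge \det H$, and chaining these estimates produces $\det([\mathrm{tr}\, H_{ij}]) \ge k^{n}(\det H)^{1/k}$; since $k^{n}\ge k$, the desired (\ref{eqfm}) follows at once.

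The only substantive obstacle is the very first step---spotting the additive decomposition of $[\mathrm{tr}\, H_{ij}]$ into positive semidefinite principal submatrices of $H$. After that, the argument is a routine chain of well-known inequalities. As a by-product, the approach actually yields a bound strictly stronger than (\ref{eqfm}) whenever $n\ge 2$, with equality characterized by $H = C\otimes I_{k}$ for some positive semidefinite $C\in\mathbb{M}_{n}$; this suggests that Fiedler and Markham's inequality leaves considerable room for improvement on block matrices, a theme presumably sharpened via symmetric tensor products in the main results of the paper.
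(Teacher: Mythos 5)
Your proof is correct, and it is worth noting at the outset that the paper itself contains no proof of Theorem \ref{thm12}: the result is quoted from Fiedler and Markham \cite{FM94}, with only the remark preceding Theorem \ref{Lin} that their argument used the superadditivity of the determinant and that Lin sharpened it via the Fan--Ky (log-concavity) inequality. Your argument follows exactly that classical skeleton --- decompose $[\mathrm{tr}\, H_{ij}]=\sum_{l=1}^{k}H[l]$ with each $H[l]$ a principal submatrix of $H$, apply a superadditivity-type determinant inequality, then AM--GM, then Fischer's inequality (\ref{eqfis}) after a simultaneous permutation exhibiting $H[1],\dots,H[k]$ as the diagonal blocks of a rearrangement of $H$ --- but because you invoke Minkowski's inequality $\det(A+B)^{1/n}\ge (\det A)^{1/n}+(\det B)^{1/n}$ rather than the cruder $\det(A+B)\ge \det A+\det B$, you actually arrive at $\det([\mathrm{tr}\, H_{ij}])\ge k^{n}(\det H)^{1/k}$, which is the stronger Theorem \ref{Lin} (inequality (\ref{eqlin})); the stated (\ref{eqfm}) then follows since $k^{n}\ge k$ and $\det([\mathrm{tr}\, H_{ij}])\ge 0$. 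In other words, you have reproduced, essentially, Lin's improved proof rather than Fiedler--Markham's original one --- a strictly better outcome. The one caveat is your closing equality characterization ($H=C\otimes I_{k}$): equality in the chained inequalities also occurs in degenerate situations where $\det H=0$, so the characterization as stated is incomplete; but this is a side remark that does not affect the validity of the proof of the theorem.
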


In fact, Lin \cite{LinZhang, Lin16} pointed out that in their proof of Theorem \ref{thm12}, 
Fiedler and Markham used the superadditivity of determinant functional, 
which can be improved by Fan-Ky's determinantal 
inequality (see \cite{FanKy} or \cite[p. 488]{HJ13}), 
i.e., the log-concavity of the determinant over the cone of positive semidefinite matrices. 
 Here we state the improved version (\ref{eqlin}) as follows;    
see \cite{Kua17, Li20} for a 
short proof and extension to the class of sector matrices.  

\begin{theorem}\label{Lin}
Let ${ H}=[H_{ij}]_{i,j=1}^n \in \mathbb{M}_n(\mathbb{M}_k)$ be positive semidefinite. Then 
\begin{equation} \label{eqlin}
\left(\frac{\det \bigl( [\mathrm{tr} H_{ij}] \bigr)}{k^n}\right)^k \ge \det { H}.
\end{equation}
\end{theorem}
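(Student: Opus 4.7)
The plan is to follow the same scaffolding as Fiedler and Markham, but replace their use of the weak superadditivity $\det(\sum_l A_l)\ge \sum_l \det(A_l)$ on the positive semidefinite cone with Fan-Ky's log-concavity
\[
\det\Bigl( \sum_{l=1}^k A_l \Bigr)^{\!1/n} \;\ge\; \sum_{l=1}^k \det(A_l)^{1/n} \qquad (A_l\ge 0 \text{ in } \mathbb{M}_n),
\]
which is precisely the tool that converts the weaker factor $k$ in (\ref{eqfm}) into $k^n$ in (\ref{eqlin}).

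First I would write $[\mathrm{tr}\, H_{ij}]$ as a sum of $k$ positive semidefinite $n\times n$ matrices. With $\{e_1,\ldots,e_k\}$ the standard basis of $\mathbb{C}^k$, set
\[
H^{(l)} := \bigl[\, \langle e_l,\, H_{ij}\, e_l\rangle \,\bigr]_{i,j=1}^n, \qquad l=1,\ldots,k,
\]
so that $[\mathrm{tr}\, H_{ij}] = \sum_{l=1}^{k} H^{(l)}$. Interpreting $H$ as a matrix in $\mathbb{M}_{nk}$ indexed by pairs $(i,l)$, the matrix $H^{(l)}$ is the principal submatrix of $H$ on the positions $\{(i,l):1\le i\le n\}$. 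In particular each $H^{(l)}\ge 0$, and after the simultaneous row--column permutation grouping coordinates by their second index, $H^{(1)},\ldots,H^{(k)}$ appear as the $k$ diagonal blocks of size $n$ in the permuted $H$.

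Next I would chain three ingredients. Fan-Ky applied to $\sum_l H^{(l)}$ gives
\[
\det\bigl([\mathrm{tr}\, H_{ij}]\bigr)^{1/n} \;\ge\; \sum_{l=1}^k \det\bigl(H^{(l)}\bigr)^{1/n}.
\]
AM-GM on $k$ nonnegative reals produces
\[
\sum_{l=1}^k \det\bigl(H^{(l)}\bigr)^{1/n} \;\ge\; k \Bigl( \prod_{l=1}^k \det\bigl(H^{(l)}\bigr) \Bigr)^{1/(nk)},
\]
and Fischer's inequality (\ref{eqfis}), applied to the permuted $H$ whose diagonal blocks are the $H^{(l)}$, yields $\prod_{l=1}^k \det(H^{(l)}) \ge \det H$. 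Combining the three bounds gives $\det([\mathrm{tr}\, H_{ij}]) \ge k^n (\det H)^{1/k}$, and raising to the $k$-th power is exactly (\ref{eqlin}).

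The only real obstacle is spotting the right decomposition: the matrices $H^{(l)}$ must serve simultaneously as diagonal blocks of a permutation of $H$ (so that Fischer bounds their product from below by $\det H$) and as positive semidefinite summands of $[\mathrm{tr}\, H_{ij}]$ (so that Fan-Ky bounds the determinant of their sum from below). Once this bridge is in place, the argument is essentially bookkeeping and I do not anticipate any further subtlety.
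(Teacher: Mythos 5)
Your argument is correct: the decomposition $[\mathrm{tr}\, H_{ij}]=\sum_{l=1}^k H^{(l)}$ into principal submatrices of (a permutation of) $H$, followed by the Minkowski/Fan-Ky inequality, AM--GM, and Fischer's inequality applied to the $k$ diagonal blocks $H^{(l)}$ of size $n$, chains together exactly as you claim to give $\det([\mathrm{tr}\, H_{ij}])\ge k^n(\det H)^{1/k}$. The paper does not prove Theorem \ref{Lin} itself but refers to the short proofs in the cited literature, and your argument is essentially that proof (Lin's), so there is nothing to correct.
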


The paper is organized as follows. 
In Section \ref{sec2}, for convenience, we briefly review some basic definitions and properties of 
symmetric tensor product in Multilinear Algebra Theory. 
In Section \ref{sec3}, we show two extensions of Fiedler-Markham's inequality 
by using symmetric tensor product (Theorem \ref{thm35} and Theorem \ref{thm37}). 
Additionally, some other determinantal inequalities of positive semidefinite block matrices are included. 
In Section \ref{sec4}, 
we  give an extension of Thompson's inequality (Theorem \ref{thm38}), 
which also yields a generalization of Fischer's inequality (Corollary \ref{coro39}).

\section{Preliminaries}
\label{sec2}

Before starting our results, we first review some basic definitions and 
notations of multilinear algebra \cite{Me97}. 
If $A=[a_{ij}]$ is of order $m\times n$ and $B$ is $s\times t$, the tensor product of $A,B$, 
denoted by $A\otimes B$, is an $ms\times nt$ matrix, 
partitioned into $m\times n$ block matrix with the $(i,j)$-block the $s\times t$ matrix $a_{ij}B$. 
Let $\otimes^r A:=A\otimes \cdots \otimes A$ be the $r$-fold tensor power of $A$. 
Let $V$ be an $n$-dimensional Hilbert space and 
$\otimes^rV$ be the tensor product space of $r$ copies of $V$. 
The symmetric tensor product of vectors $v_1,v_2,\ldots ,v_r$ in $V$ is defined as 
\[ v_1\vee v_2 \vee \cdots \vee v_r := \frac{1}{\sqrt{r!}} \sum_{\sigma } 
v_{\sigma (1)}\otimes v_{\sigma (2)} \otimes \cdots \otimes v_{\sigma (r)}, \]
where $\sigma$ runs over all permutations of the $r$ indices. 
The linear span of all these vectors comprises the subspace $\vee^rV$ of $\otimes^r V$, 
this is  called the $r$th symmetric tensor power of $V$. 
Let $A$ be a linear map on $V$, then $(\otimes^r A)(v_1\vee \cdots \vee v_r)= 
Av_1\vee \cdots \vee Av_r$ lies in $\vee^r V$ 
for all $v_1,\ldots ,v_r$ in $V$. Therefore, the subspace $\vee^r V$ 
is invariant under the tensor operator $\otimes^r A$. 
The restriction of $\otimes^r A$ to this invariant subspace is denoted by $\vee^r A$ 
and called the $r$th symmetric tensor power of $A$; 
see \cite[pp. 16-19]{Bha97} and \cite{Me97} for more details. 
We denote by $s_r(A)$ the $r$th complete symmetric polynomial  
of the eigenvalues of  $A\in \mathbb{M}_n(\mathbb{C})$, i.e, 
\[ s_r(A):=\sum_{1\le i_1\le i_2\le \cdots \le i_r\le n} \lambda_{i_1}(A)\lambda_{i_2}(A)\cdots 
\lambda_{i_r}(A). \]

Some basic properties of  tensor product 
are summarised below.

\begin{proposition}\label{prop21}
Let $A, B, C$ be matrices of sizes $n\times n$. Then 
\begin{enumerate}
	\item[(1).]  \quad $\otimes^r(AB)=(\otimes^rA)( \otimes^r B)$ 
and $\vee^r(AB)=(\vee^rA)( \vee^r B)$.
	\item[(2).] \quad  $\tr (\otimes^rA)= (\tr A)^r$ and $\tr (\vee^rA)=s_r(A)$.
	\item[(3).]  \quad $\det (\otimes^r A)=(\det A)^{rn^{r-1}}$ 
and $\det (\vee^rA)=(\det A)^{ \frac{r}{n}{n+r-1 \choose r}}$.
\end{enumerate}
Furthermore, if $A,B,C$ are positive semidefinite matrices, then 
\begin{enumerate}
	\item[(4).] \quad $A\otimes B$ and $A\vee B$ are positive semidefinite. 
\item[(5).] \quad If $A\ge B$, then $A\otimes C\ge B\otimes C$ and 
$A\vee C\ge B\vee C$. 
\item[(6).] \quad $\otimes^r (A+B)\ge \otimes^r A+\otimes^r B$ and 
$\vee^r (A+B)\ge \vee^r A+\vee^r B$ for all positive integer $r$. 
\end{enumerate}
\end{proposition}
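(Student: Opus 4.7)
The plan is to prove the six properties in order, reducing everything to basic identities on decomposable tensors together with Schur triangularization. The arguments split naturally into three groups: (1) is purely algebraic, (2)--(3) are eigenvalue counts, and (4)--(6) are positivity facts derived by factorization.

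For (1), I would verify the identity $\otimes^r(AB) = (\otimes^r A)(\otimes^r B)$ on the spanning set of decomposable tensors, using $(A\otimes B)(u\otimes v) = Au\otimes Bv$ to get
\[ \otimes^r(AB)(v_1\otimes\cdots\otimes v_r) = ABv_1\otimes\cdots\otimes ABv_r = (\otimes^r A)(\otimes^r B)(v_1\otimes\cdots\otimes v_r). \]
The $\vee^r$-version is then automatic: both sides leave $\vee^r V$ invariant, and $\vee^r X$ is defined as the restriction of $\otimes^r X$ to $\vee^r V$.

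For (2) and (3), I would Schur-triangularize $A = U^*TU$ with eigenvalues $\lambda_1,\ldots,\lambda_n$ on the diagonal of $T$. Then $\otimes^r A = (\otimes^r U)^*(\otimes^r T)(\otimes^r U)$ with $\otimes^r U$ unitary and, ordering multi-indices lexicographically, $\otimes^r T$ upper triangular with diagonal entries $\lambda_{i_1}\cdots\lambda_{i_r}$ running over all tuples $(i_1,\ldots,i_r)$ with $1\le i_j\le n$. Summing these yields $\tr(\otimes^r A) = (\tr A)^r$, and the product formula $\det(\otimes^r A) = (\det A)^{rn^{r-1}}$ follows from a symmetry count: each $\lambda_i$ occurs in exactly $rn^{r-1}$ of the products. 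Restricting to $\vee^r V$ cuts the index set down to $1\le i_1\le\cdots\le i_r\le n$, which immediately gives $\tr(\vee^r A) = s_r(A)$ and, by the analogous count, $\det(\vee^r A) = (\det A)^{(r/n)\binom{n+r-1}{r}}$. The most delicate point I foresee is justifying that each $\lambda_i$ occurs equally often among the weakly increasing multi-indices; this follows from $S_n$-invariance of the index set together with a total-symbol double count giving $r\binom{n+r-1}{r}/n$ per eigenvalue.

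Finally, (4)--(6) follow from factorization. For (4), I would write $A = X^*X$, $B = Y^*Y$ with $X,Y$ the positive square roots, so that $A\otimes B = (X\otimes Y)^*(X\otimes Y)\ge 0$; restricting to $\vee^2 V$ preserves positivity, where $A\vee B$ is understood as the restriction of $A\otimes B$ to that invariant subspace, consistent with the definition in Section \ref{sec2}. Property (5) is immediate from (4) applied to $A-B\ge 0$ and $C\ge 0$. For (6), I would expand multilinearly
\[ \otimes^r(A+B) = \sum_{(\epsilon_1,\ldots,\epsilon_r)\in\{A,B\}^r} \epsilon_1\otimes\cdots\otimes\epsilon_r, \]
subtract the pure summands $\otimes^r A$ and $\otimes^r B$, and note that each of the remaining $2^r-2$ mixed tensor products is positive semidefinite by iterating (4). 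The symmetric-tensor version follows by restricting to $\vee^r V$. Overall, the main obstacle I anticipate is purely combinatorial bookkeeping in (3); the remaining steps are direct consequences of the definitions.
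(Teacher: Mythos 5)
Your proposal is correct, and all of its arguments are the standard ones. Note that the paper itself offers no proof of Proposition \ref{prop21}: it states these as known facts and defers to the cited references on multilinear algebra (Bhatia's book and Merris's book), so there is no in-paper argument to compare against; your write-up simply supplies the standard verifications. One small imprecision worth fixing in (4): for $A\neq B$ the subspace $\vee^2 V$ is \emph{not} invariant under $A\otimes B$ (apply $A\otimes B$ to $u\vee v$ and observe the result is generally not symmetric), so $A\vee B$ should be understood as the \emph{compression} $P(A\otimes B)P^*$ to $\vee^2 V$ rather than a restriction to an invariant subspace. This does not affect your conclusion, since any compression of a positive semidefinite operator is positive semidefinite, and in (6), where the restriction language genuinely applies, the three operators $\otimes^r(A+B)$, $\otimes^r A$, $\otimes^r B$ do each leave $\vee^r V$ invariant, so compressing the positive semidefinite difference to $\vee^r V$ gives exactly $\vee^r(A+B)-\vee^r A-\vee^r B\ge 0$ as you claim. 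The combinatorial counts in (3) ($rn^{r-1}$ occurrences per eigenvalue over all $n^r$ tuples, and $\frac{r}{n}\binom{n+r-1}{r}$ over the weakly increasing tuples by symmetry) are also right.
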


In this paper, we are mainly investigate positive semidefinite block matrices. 
For $H=[H_{ij}]\in \mathbb{M}_n(\mathbb{M}_k)$, 
we denote by $T_n^r(H):=[\otimes^r H_{ij}]\in \mathbb{M}_n(\mathbb{M}_{k^r} )$ and 
$Q_n^r(H):=[\vee^r H_{ij}]\in \mathbb{M}_n(\mathbb{M}_{{k+r-1 \choose r} })$.

\section{Extensions of Fiedler-Markham's inequality}
\label{sec3}

In the section, we first prove some lemmas for latter use, 
and then we give two extensions of Fiedler-Markham's inequality. 

\begin{lemma} \label{lem31}
Let $H=[H_{ij}]\in \mathbb{M}_n(\mathbb{M}_k)$. 
Then $T_n^r(H)$ is a principal submatrix of $\otimes^r H$.
\end{lemma}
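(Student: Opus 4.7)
The plan is to realize $T_n^r(H)$ as the restriction of $\otimes^r H$ to a distinguished ``diagonal'' subspace of the ambient space. Write $H = \sum_{i,j=1}^n E_{ij}\otimes H_{ij}$, where $E_{ij}\in\mathbb{M}_n$ denotes the standard matrix unit; this identifies the ambient space of $H$ with $\mathbb{C}^n\otimes\mathbb{C}^k$, carrying the basis $\{e_i\otimes f_\alpha:i\in[n],\,\alpha\in[k]\}$. The matrix $\otimes^r H$ then acts on $\otimes^r(\mathbb{C}^n\otimes\mathbb{C}^k)$, while $T_n^r(H)=[\otimes^r H_{ij}]_{i,j=1}^n$ acts on $\mathbb{C}^n\otimes(\otimes^r\mathbb{C}^k)$.

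Next, I would apply the $r$-fold tensor power to the block expansion of $H$ and invoke the canonical shuffling isomorphism
\[
\otimes^r(\mathbb{C}^n\otimes\mathbb{C}^k)\;\cong\;(\otimes^r\mathbb{C}^n)\otimes(\otimes^r\mathbb{C}^k),
\]
which regroups the tensor factors so that all $\mathbb{C}^n$ factors stand first and all $\mathbb{C}^k$ factors second. Under this identification one obtains
\[
\otimes^r H \;=\; \sum_{I,J\in[n]^r}\bigl(E_{i_1 j_1}\otimes\cdots\otimes E_{i_r j_r}\bigr)\otimes\bigl(H_{i_1 j_1}\otimes\cdots\otimes H_{i_r j_r}\bigr),
\]
where $I=(i_1,\ldots,i_r)$ and $J=(j_1,\ldots,j_r)$. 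In other words, $\otimes^r H$ is realized as an $n^r\times n^r$ block matrix (with blocks of size $k^r$) whose $(I,J)$-block is $H_{i_1 j_1}\otimes\cdots\otimes H_{i_r j_r}$.

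Finally, I would specialize to the $n$ ``constant'' multi-indices $I_i=(i,i,\ldots,i)$ for $i=1,\ldots,n$, and extract the principal submatrix whose rows and columns are indexed by $I_1,\ldots,I_n$. For $I=I_i$ and $J=I_j$ the block collapses to $\otimes^r H_{ij}$, and as $i,j$ range over $[n]$ this reproduces $T_n^r(H)$ exactly, proving that $T_n^r(H)$ is a principal submatrix of $\otimes^r H$.

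The main obstacle here is purely bookkeeping: one must set up the shuffle isomorphism carefully and verify that it produces the claimed block indexing. No new inequality or nontrivial multilinear identity is invoked --- once the tensor factors have been regrouped correctly, the identification of $T_n^r(H)$ with the principal submatrix indexed by the diagonal multi-indices is forced.
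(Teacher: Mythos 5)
Your argument is correct. It reaches the same destination as the paper --- identifying $T_n^r(H)$ as the compression of $\otimes^r H$ onto the standard basis vectors whose $\mathbb{C}^n$-components all agree --- but by a different route. The paper writes $H=X^*Y$, observes $H_{ij}=E_i^*(X^*Y)E_j$ for block-column selectors $E_j\in\mathbb{M}_{nk\times k}$, and uses multiplicativity of $\otimes^r$ to get $[\otimes^r H_{ij}]=E^*(\otimes^r H)E$ with $E=[\otimes^r E_1,\ldots,\otimes^r E_n]$, then checks that the columns of $E$ are distinct standard basis vectors; you instead expand $H=\sum_{i,j}E_{ij}\otimes H_{ij}$, regroup factors via the shuffle isomorphism $\otimes^r(\mathbb{C}^n\otimes\mathbb{C}^k)\cong(\otimes^r\mathbb{C}^n)\otimes(\otimes^r\mathbb{C}^k)$, and read off the principal submatrix indexed by the constant multi-indices $(i,\ldots,i)$. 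Your version makes the combinatorics of \emph{which} principal submatrix completely explicit, at the cost of the multi-index bookkeeping; the paper's version hides the bookkeeping inside the selector matrices and the identity $\otimes^r(ABC)=(\otimes^r A)(\otimes^r B)(\otimes^r C)$, and in particular its factorization $H=X^*Y$ is an inessential detour. The only point you should state explicitly is that the shuffle isomorphism is implemented by a permutation of the standard basis, so a principal submatrix of the regrouped matrix is again a principal submatrix of $\otimes^r H$ itself (with a permuted index set); with that one sentence added, the proof is complete.
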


\begin{proof}
Without loss of generality, we may write $H=X^*Y$, where $X,Y$ are $nk\times nk$. 
Now we partition $X=(X_1,X_2,\ldots ,X_n)$ and $Y=(Y_1,Y_2,\ldots ,Y_n)$ 
with each $X_i,Y_i$ is an $nk\times k$ complex matrix. 
Under this partition, we see that $H_{ij}=X_i^*Y_j$. 
Also we have $Y_j=YE_j$, where $E_j$ is a suitable $nk\times k$ matrix 
such that its $j$-th block  is extractly $I_k$ and otherwise $0$.  
By (1) of Proposition \ref{prop21}, we obtain
\[ \otimes^r H_{ij}=\otimes^r (X_i^*Y_j )= \otimes^r (E_i^*X^*YE_j)
=(\otimes^r E_i)^* (\otimes^r (X^*Y)) (\otimes^r E_j).\]
In other words, 
\[ [\otimes^r H_{ij}]_{i.j=1}^n =E^*(\otimes^rA)E, 
\quad E=[\otimes^r E_1,\otimes^r E_2,\ldots ,\otimes^r E_n]. \]
It is easy to verify that $E$ is a permutation matrix with $1$ only in diagonal entries.
\end{proof}

\begin{corollary} \label{coro32}
If $H \in \mathbb{M}_n(\mathbb{M}_k)$ is positive semidefinite, 
then so are $T_n^r(H)$ and $Q_n^r(H)$. 
\end{corollary}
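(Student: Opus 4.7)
The plan is to deal with the two assertions separately, riding on Lemma \ref{lem31} for the first and on an analogous compression argument for the second.

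For $T_n^r(H)$: Since $H$ is positive semidefinite, repeated application of part (4) of Proposition \ref{prop21} shows that $\otimes^r H = H \otimes H \otimes \cdots \otimes H$ is positive semidefinite. By Lemma \ref{lem31}, $T_n^r(H)$ is a principal submatrix of $\otimes^r H$, and every principal submatrix of a positive semidefinite matrix is positive semidefinite. This handles $T_n^r(H)$ in one line.

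For $Q_n^r(H)$: I would exploit the defining relationship between $\vee^r$ and $\otimes^r$. Recall from Section \ref{sec2} that $\vee^r A$ is the restriction of $\otimes^r A$ to the invariant subspace $\vee^r \mathbb{C}^k \subseteq \otimes^r \mathbb{C}^k$. If $J : \vee^r \mathbb{C}^k \hookrightarrow \otimes^r \mathbb{C}^k$ denotes the inclusion isometry, this means
\[
\vee^r A \;=\; J^*(\otimes^r A)\,J
\]
for every $A \in \mathbb{M}_k$. Applying this identity block-by-block with $A = H_{ij}$ gives
\[
Q_n^r(H) \;=\; [\vee^r H_{ij}]_{i,j=1}^n \;=\; (I_n \otimes J)^* \, T_n^r(H) \, (I_n \otimes J).
\]
Since $T_n^r(H)$ has already been shown to be positive semidefinite, the right-hand side is a congruence of a positive semidefinite matrix by a single rectangular matrix $I_n \otimes J$, and is therefore itself positive semidefinite.

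The only non-routine point is justifying the compression identity $\vee^r A = J^*(\otimes^r A)J$, but this is exactly the content of the definition of $\vee^r A$ recalled in Section \ref{sec2}, so no real obstacle arises. An alternative route would be to mimic the factorization in Lemma \ref{lem31} by writing $H = X^*X$ and using $\vee^r(X_i^* X_j) = (\vee^r X_i)^*(\vee^r X_j)$, exhibiting $Q_n^r(H)$ directly as a Gram matrix; this avoids invoking the inclusion $J$ but requires extending $\vee^r$ to rectangular matrices, which is slightly less clean than the compression viewpoint above.
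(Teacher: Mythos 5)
Your proof is correct and follows essentially the same route as the paper: the paper likewise deduces positivity of $T_n^r(H)$ from its being a principal submatrix of the positive semidefinite matrix $\otimes^r H$ (Lemma \ref{lem31}), and obtains $Q_n^r(H)$ by passing to the symmetric subspace. Your explicit compression identity $Q_n^r(H)=(I_n\otimes J)^*\,T_n^r(H)\,(I_n\otimes J)$ merely spells out the step for $Q_n^r(H)$ that the paper leaves implicit (its Lemma \ref{lem31} is stated only for $T_n^r$), which is a welcome clarification rather than a different argument.
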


\begin{proof}
As $H$ is positive semidefinite, 
so are $\otimes^r H$ and $\vee^r H$. 
By Lemma \ref{lem31}, we can see that $T_n^r(H)$ and $Q_n^r(H)$ are positive semidefinite. 
\end{proof}

\begin{lemma} \label{lem33}
Let $A,B\in \mathbb{M}_n(\mathbb{M}_k)$ be positive semidefinite. 
Then for $r\in \mathbb{N}^*$
\[ T_n^r(A+B) \ge T_n^r(A) +T_n^r(B),\]
and 
\[ Q_n^r(A+B) \ge Q_n^r(A) +Q_n^r(B). \]
\end{lemma}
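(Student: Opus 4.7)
My plan is to deduce both inequalities directly from Proposition \ref{prop21}(6) by expressing $T_n^r(H)$ and $Q_n^r(H)$ as congruence transforms $C^*(\otimes^r H)C$ and $D^*(\vee^r H)D$, where the matrices $C$ and $D$ depend only on the block structure (not on $H$). This way, the superadditivity at the level of $H$ transfers to superadditivity at the level of $T_n^r$ and $Q_n^r$ by the elementary fact that $M\ge 0$ implies $X^*MX\ge 0$.

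First I would handle the tensor case, following the proof of Lemma \ref{lem31} almost verbatim. Let $E_i$ denote the $nk\times k$ block selection matrix whose $i$-th block is $I_k$ and whose other blocks vanish; crucially $E_i$ depends only on $n$ and $k$. Then $H_{ij}=E_i^*HE_j$ for every $H\in\mathbb{M}_n(\mathbb{M}_k)$, and Proposition \ref{prop21}(1) yields
\[
\otimes^r H_{ij} = (\otimes^r E_i)^*(\otimes^r H)(\otimes^r E_j),
\]
so that $T_n^r(H)=E^*(\otimes^r H)E$ with $E:=[\otimes^r E_1,\ldots ,\otimes^r E_n]$. Writing this identity for $A+B$, $A$, and $B$ and subtracting gives
\[
T_n^r(A+B)-T_n^r(A)-T_n^r(B)=E^*\bigl(\otimes^r(A+B)-\otimes^r A-\otimes^r B\bigr)E,
\]
which is positive semidefinite by Proposition \ref{prop21}(6).

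For the symmetric case I would use the fact, standard in multilinear algebra, that $\vee^r$ is functorial on rectangular maps: for any linear $X\colon\mathbb{C}^p\to\mathbb{C}^q$, the operator $\otimes^r X$ sends $\vee^r\mathbb{C}^p$ into $\vee^r\mathbb{C}^q$, and its restriction $\vee^r X$ still satisfies $\vee^r(YX)=(\vee^r Y)(\vee^r X)$ whenever the composition makes sense. Applying this to $H_{ij}=E_i^*HE_j$ gives
\[
\vee^r H_{ij}=(\vee^r E_i)^*(\vee^r H)(\vee^r E_j),
\]
hence $Q_n^r(H)=F^*(\vee^r H)F$ with $F:=[\vee^r E_1,\ldots ,\vee^r E_n]$ independent of $H$. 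The same subtraction argument, combined with Proposition \ref{prop21}(6), closes the second inequality.

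The only point that requires care is invoking $\vee^r$ on the rectangular selection matrices $E_i$, but this is standard (see, e.g., \cite{Me97}) and adds no real hypothesis beyond what is already tacitly used in Lemma \ref{lem31}. Everything else is mechanical, so I do not expect a genuine obstacle.
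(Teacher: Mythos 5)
Your proof is correct and follows essentially the same route as the paper: both arguments realize $T_n^r(H)$ (and $Q_n^r(H)$) as a compression $E^*(\otimes^r H)E$ (resp.\ its restriction to symmetric tensors) by a matrix independent of $H$, and then transfer the superadditivity of Proposition \ref{prop21}(6) through that congruence. Your treatment of the symmetric case via $\vee^r$ of the rectangular selection matrices is just a more explicit phrasing of the paper's ``restrict to the symmetric tensors'' step, so no further changes are needed.
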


\begin{proof}
By the basic property of tensor power, Proposition \ref{prop21}, we have 
\[ \otimes^r (A+B)\ge \otimes^r A+\otimes^r B. \]
Since $[\otimes^r A_{ij}]_{i,j=1}^n $ 
is a principal submatrix of $\otimes^r A$, Lemma \ref{lem31}, 
it yields 
\begin{equation*}
  [\otimes^r (A_{ij}+B_{ij})]_{i,j=1}^n \ge [\otimes^r A_{ij}]_{i,j=1}^n + 
[\otimes^r B_{ij}]_{i,j=1}^n.  
\end{equation*}
By restricting above inequality to the symmetric tensors, we obtain 
\[  [\vee^r (A_{ij}+B_{ij})]_{i,j=1}^n \ge [\vee^r A_{ij}]_{i,j=1}^n + 
[\vee^r B_{ij}]_{i,j=1}^n.  \]
This completes the proof. 
\end{proof}

The following Proposition \ref{prop34} 
is a key step in proof of our extensions (Theorem \ref{thm35}), 
and it can be regarded  as a Thompson-type determinantal inequality.   

 \begin{proposition}\label{prop34} 
Let $H=[H_{ij}]\in \mathbb{M}_{n}(\mathbb{M}_{k})$ be positive  definite. 
Then for $r\in \mathbb{N}^*$  
\begin{eqnarray}\label{eq4}
 	 \det T_n^r(H) \ge (\det H)^{ rk^{r-1}}.
 	\end{eqnarray}  
\end{proposition}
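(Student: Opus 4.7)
The plan is to induct on $n$, combining a Schur complement argument with Lemma \ref{lem33}. The base case $n=1$ is immediate: $T_1^r(H)=H^{\otimes r}$, and Proposition \ref{prop21}(3) gives $\det H^{\otimes r}=(\det H)^{rk^{r-1}}$.

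For the inductive step, I would set $A=H_{11}$ and view $T_n^r(H)$ as a $2\times 2$ block matrix with $(1,1)$-block $A^{\otimes r}$. Using $(A^{\otimes r})^{-1}=(A^{-1})^{\otimes r}$ together with Proposition \ref{prop21}(1), the Schur complement of $A^{\otimes r}$ in $T_n^r(H)$ takes the explicit form
\[
S=\bigl[H_{ij}^{\otimes r}-(H_{i1}A^{-1}H_{1j})^{\otimes r}\bigr]_{i,j=2}^{n}.
\]
The crucial step will be to prove the matrix inequality $S\geq T_{n-1}^r(H/A)$ in the positive semidefinite order. To establish this, I would write $H_{ij}=C_{ij}+D_{ij}$ for $i,j\geq 2$, where $C_{ij}=H_{i1}A^{-1}H_{1j}$ and $D_{ij}=(H/A)_{ij}$. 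Both block matrices $[C_{ij}]_{i,j\geq 2}$ and $[D_{ij}]_{i,j\geq 2}$ are positive semidefinite: the first equals $VV^*$ for the block column $V=[H_{i1}A^{-1/2}]_{i\geq 2}$, and the second is the Schur complement $H/A$ of $A$ in $H$. Applying Lemma \ref{lem33} to these two PSD matrices then yields $[H_{ij}^{\otimes r}]_{i,j\geq 2}\geq [C_{ij}^{\otimes r}]+[D_{ij}^{\otimes r}]$, which rearranges precisely to $S\geq T_{n-1}^r(H/A)$.

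Given the Schur complement inequality, the conclusion follows by combining Schur's determinant identity $\det T_n^r(H)=\det(A^{\otimes r})\cdot\det S$, Proposition \ref{prop21}(3) to compute $\det A^{\otimes r}=(\det A)^{rk^{r-1}}$, the monotonicity $\det S\geq\det T_{n-1}^r(H/A)$ (from $S\geq T_{n-1}^r(H/A)\geq 0$), the inductive hypothesis applied to the positive definite $H/A\in\mathbb{M}_{n-1}(\mathbb{M}_k)$, and the classical identity $\det H=\det A\cdot\det(H/A)$. The main obstacle is justifying $S\geq T_{n-1}^r(H/A)$: since $X^{\otimes r}-Y^{\otimes r}$ does not equal $(X-Y)^{\otimes r}$ in general, this is not an algebraic identity but a genuine positivity statement, and Lemma \ref{lem33} (the superadditivity of $T_n^r$ on the PSD cone) is exactly the tool that converts it into a tractable inequality.
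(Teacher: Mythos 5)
Your proposal is correct and follows essentially the same route as the paper's proof: both arguments induct on $n$, peel off the $(1,1)$ block via a Schur complement whose tensor structure is computed from Proposition \ref{prop21}(1), and invoke the superadditivity of Lemma \ref{lem33} applied to the decomposition of the trailing block into the rank-one-type contribution $H_{i1}A^{-1}H_{1j}$ plus the Schur complement $H/A$. The only cosmetic difference is that the paper packages this same decomposition through a block Cholesky factorization $H=T^*T$ with $T$ block upper triangular and a normalization $T_{ii}=I_k$ (so that the trailing block becomes $\widehat{T}^*\widehat{T}+V^*V$, playing the role of your $D_{ij}+C_{ij}$), whereas you work directly with $H$ and $H/A$.
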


 \begin{proof}   
Since the determinant functional is continuous, 
we may assume without loss of generality that $H$ is positive definite 
by a standard perturbation argument.  
 As $H$ is positive definite, we may further write $H=T^*T$ with $T=[T_{ij}]\in \mathbb{M}_{n}(\mathbb{M}_{k})$ being block upper triangular matrix, see \cite[p. 441]{HJ13}. 
Note that 
\begin{eqnarray*}
	(\det H)^{rk^{r-1}}
&=& (\det T^{*}T)^{rk^{r-1}}
 =\Bigl( \prod^{n}_{i=1}\det T_{ii}^{*} \cdot\prod^{n}_{i=1}\det T_{ii}    \Bigr)^{rk^{r-1}}   \\
	&=& \prod_{i=1}^{n}(\det T_{ii}^{*})^{rk^{r-1}} 
      \cdot \prod_{i=1}^{n}(\det T_{ii})^{rk^{r-1}} \\
	&=&\prod_{i=1}^{n}\det(\otimes ^{r}T_{ii}^{*})\prod_{i=1}^{n}\det(\otimes^{r}T_{ii}),
\end{eqnarray*}
where the last equality is by Proposition \ref{prop21}. 
We next may assume $T_{ii}=I_k$ by pre- and post-multiplying both sides of (\ref{eq4})
by $\prod_{i=1}^{n}\det(\otimes^{r}T_{ii}^{-*})$ and $\prod_{i=1}^{n}\det(\otimes^{r}T_{ii}^{-1})$,
 respectively. Thus, it suffices to show that  
\begin{eqnarray}\label{e34} \det T_{n}^{r}(T^{*}T)\ge 1. \end{eqnarray}

We now prove (\ref{e34}) by induction. When $n=2$,
\begin{align*}
\det\big(T_2^{r}(T^*T)\big) 
&=\det \begin{bmatrix} \otimes^{r}I_{k} &  \otimes^{r}T_{12}  \\ 
\otimes^{r}T_{12}^{*}  &   \otimes^{r}(I_k+T_{12}^*T_{12}) \end{bmatrix} \\
&=\det \begin{bmatrix} I_{k^r} &  \otimes^{r}T_{12}  \\ 
\otimes^{r}T_{12}^{*}  &   \otimes^{r}(I_k+T_{12}^*T_{12})\end{bmatrix}\\
&=\det \begin{bmatrix} I_{k^r} &  \otimes^{r}T_{12}  \\ 
0  &   \otimes^{r}(I_k+T_{12}^*T_{12})-\otimes^{r}T_{12}^{*}\otimes^{r}T_{12}\end{bmatrix}\\
&=\det\left(\otimes^{r}(I_k+T_{12}^*T_{12})-\otimes^{r}(T_{12}^{*}T_{12})\right)\\
&\ge\det(\otimes^{r}I_k)= 1, 
\end{align*} 
in which the first inequality is by  Proposition \ref{prop21}. 

Suppose now (\ref{e34}) is true for $n=m$, and then consider the case $n=m+1$. 
For notational convenience, we denote $T= \begin{bmatrix} I_k &  V\\ 0&\widehat{T}\end{bmatrix}$, 
where $V= \begin{bmatrix}T_{12}&\cdots & T_{1n}\end{bmatrix}$ and $\widehat{T}=\Big[T_{i+1,j+1}\Big]_{i,j=1}^m$. 
Let $\widehat{V}=\begin{bmatrix}\otimes^{r}T_{12}&\cdots & \otimes^{r}T_{1n}\end{bmatrix}$. 
Clearly, by Proposition \ref{prop21}, $\widehat{V}^*\widehat{V}=T_{m}^{r}(V^{*}V)$.

Now computing
\begin{eqnarray*}
	T^*T=
	\begin{bmatrix} I_k &  V\\ 0&\widehat{T}\end{bmatrix}^*
	\begin{bmatrix} I_k &   V\\ 0&\widehat{T}\end{bmatrix}
	= \begin{bmatrix} I_k &  V\\ V^*&\widehat{T}^*\widehat{T}+V^*V\end{bmatrix}.
\end{eqnarray*}
Then 
\begin{eqnarray*}
	\det\big(T_n^{r}(T^*T)\big)&=&
	\det\begin{bmatrix}
		\otimes^{r}I_{k} &  \widehat{V}\\ \widehat{V}^*& T_m^{r}(\widehat{T}^*\widehat{T}+V^*V)
	\end{bmatrix}\\
	&=&\det\big(T_m^{r}(\widehat{T}^*\widehat{T}+V^*V)-\widehat{V}^*\widehat{V}\big)\\
	&=&\det\big(T_m^{r}(\widehat{T}^*\widehat{T}+V^*V)-T_{m}^{r}(V^{*}V)\big)\\	
    &\ge& \det\big(T_m^{r} (\widehat{T}^*\widehat{T}) +T_m^{r}(V^*V)-T_{m}^{r}(V^{*}V)\big)\\
	&=&\det\big(T_m^{r}(\widehat{T}^*\widehat{T})\big)\ge 1,
\end{eqnarray*}
in which the first inequality is by  Lemma \ref{lem33}, 
while the second one is by the induction hypothesis. 
Thus, (\ref{e34})  holds for $n=m+1$, 
so the proof of the induction step is complete.  
Hence we complete the proof of the proposition.   
\end{proof}

We now give the first extension of Fiedler-Markham's inequality (\ref{eqfm}) and (\ref{eqlin}). 

\begin{theorem}\label{thm35}
Let  $H=[H_{ij}]\in \mathbb{M}_{n}(\mathbb{M}_{k})$  be positive semidefinite.  
Then  for $r\in \mathbb{N}^*$ 
\begin{equation} \label{eq6}
\left(\frac{\det \bigl[ (\mathrm{tr} H_{ij})^r \bigr] }{k^{rn}}\right)^k \ge (\det { H})^r.
\end{equation}
\end{theorem}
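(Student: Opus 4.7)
The plan is to reduce Theorem \ref{thm35} to the already established Theorem \ref{Lin} (Lin's improved Fiedler–Markham inequality) applied not to $H$ itself but to its $r$th tensor lift $T_n^r(H)=[\otimes^r H_{ij}]$. The key observation is the compatibility of traces with tensor powers coming from Proposition \ref{prop21}(2), namely $\tr(\otimes^r H_{ij})=(\tr H_{ij})^r$, which will convert the matrix $\bigl[(\tr H_{ij})^r\bigr]$ appearing on the left of \eqref{eq6} into $\bigl[\tr(\otimes^r H_{ij})\bigr]$, the exact object to which Theorem \ref{Lin} can be applied.

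First I would verify that Theorem \ref{Lin} is applicable to $T_n^r(H)$. By Corollary \ref{coro32}, the block matrix $T_n^r(H)\in \mathbb{M}_n(\mathbb{M}_{k^r})$ is positive semidefinite whenever $H$ is; note that here the block size is $k^r$ rather than $k$. Substituting $k\mapsto k^r$ into \eqref{eqlin} yields
\begin{equation*}
\left(\frac{\det\bigl[\tr(\otimes^r H_{ij})\bigr]}{(k^r)^n}\right)^{k^r} \ge \det T_n^r(H).
\end{equation*}
Using $\tr(\otimes^r H_{ij})=(\tr H_{ij})^r$ and $(k^r)^n=k^{rn}$, the left-hand side becomes $\bigl(\det[(\tr H_{ij})^r]/k^{rn}\bigr)^{k^r}$.

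Next I would invoke Proposition \ref{prop34} to bound the right-hand side from below: $\det T_n^r(H)\ge (\det H)^{rk^{r-1}}$. Chaining the two estimates gives
\begin{equation*}
\left(\frac{\det\bigl[(\tr H_{ij})^r\bigr]}{k^{rn}}\right)^{k^r} \ge (\det H)^{rk^{r-1}}.
\end{equation*}
Finally, both sides being nonnegative, I can extract the $k^{r-1}$-th root to obtain \eqref{eq6}.

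I do not foresee a serious obstacle: the tensor-lift identity for trace is exact, so no slack is lost, and both Theorem \ref{Lin} and Proposition \ref{prop34} are already available. The only point that needs care is the standard perturbation/continuity reduction so that $\det H$ on the right may be taken to be zero harmlessly (if $\det H=0$ the inequality is trivial), and the matching of block sizes when substituting $k\mapsto k^r$ into Lin's inequality.
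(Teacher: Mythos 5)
Your proposal is correct and follows essentially the same route as the paper: apply Lin's inequality (Theorem \ref{Lin}) to the positive semidefinite lift $T_n^r(H)$ with block size $k^r$, use $\tr(\otimes^r H_{ij})=(\tr H_{ij})^r$ to identify the left-hand side, lower-bound $\det T_n^r(H)$ by $(\det H)^{rk^{r-1}}$ via Proposition \ref{prop34}, and extract the $k^{r-1}$-th root. No gaps.
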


\begin{proof}
The proof is a combination of   Theorem \ref{Lin} and Proposition \ref{prop34}.  
By Corollary \ref{coro32}, $T_{n}^{r}(H) \in 
\mathbb{M}_{n}(\mathbb{M}_{k^r})$ is positive semidefinite, 
then by  (\ref{eqlin}) of Theorem \ref{Lin}, we have
  \begin{eqnarray*}
 	\left(\frac{\det [ (\tr H_{ij})^r]}{k^{rn}}\right)^{k^r}
= \left(\frac{\det [\tr \otimes^{r}H_{ij}]}{k^{rn}}\right)^{k^r} 
\geq \det T_{n}^{r}(H),
 	\end{eqnarray*}
which together with Proposition \ref{prop34} leads to the following
 $$ \left(\frac{\det [ (\tr H_{ij})^r]}{k^{rn}}\right)^{k^r} 
\geq (\det H)^{ rk^{r-1}}.$$
Hence, the desired result (\ref{eq6}) follows.
\end{proof}

Obviously, when $r=1$, (\ref{eq6}) reduces to Fiedler and Markham's result (\ref{eqlin}). 
Using the same idea in the proof of Proposition \ref{prop34}, 
one could also get the following determinantal inequality for $Q_n^r(H)$.   
We omit the proof and leave the details for the interested reader.

 \begin{proposition}\label{prop36} 
Let $H=[H_{ij}]\in \mathbb{M}_{n}(\mathbb{M}_{k})$ be positive  definite. 
Then for $r\in \mathbb{N}^*$  
\begin{eqnarray*}\label{eq7}
 	 \det Q^{r}_n(H)\ge (\det H)^{ \frac{r}{k}{k+r-1\choose r}}.
 	\end{eqnarray*}  
\end{proposition}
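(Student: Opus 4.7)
The plan is to follow exactly the same strategy as the proof of Proposition \ref{prop34}, but with the tensor power $\otimes^r$ replaced by the symmetric tensor power $\vee^r$ throughout, and the block operator $T_n^r$ replaced by $Q_n^r$. The key ingredients---Proposition \ref{prop21}(1), (3) and (6), together with the symmetric-power half of Lemma \ref{lem33}---all have the form needed to make the substitution go through line by line.

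First I would write $H = T^*T$ where $T = [T_{ij}] \in \mathbb{M}_n(\mathbb{M}_k)$ is block upper triangular (block Cholesky). Applying Proposition \ref{prop21}(1) and (3) blockwise gives
\[
(\det H)^{\frac{r}{k}\binom{k+r-1}{r}}
= \prod_{i=1}^n \det(\vee^r T_{ii}^*) \cdot \prod_{i=1}^n \det(\vee^r T_{ii}),
\]
so, after pre- and post-multiplying by $\prod_i \det(\vee^r T_{ii}^{-*})$ and $\prod_i \det(\vee^r T_{ii}^{-1})$, it suffices to prove
\[
\det Q_n^r(T^*T) \ge 1
\]
under the additional normalization $T_{ii} = I_k$ for every $i$.

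I would then establish this by induction on $n$. For $n=2$, taking the Schur complement with respect to the $(1,1)$ block $\vee^r I_k = I_{\binom{k+r-1}{r}}$ yields
\[
\det Q_2^r(T^*T) = \det\!\bigl(\vee^r(I_k + T_{12}^*T_{12}) - \vee^r(T_{12}^*T_{12})\bigr),
\]
and the superadditivity $\vee^r(A+B) \ge \vee^r A + \vee^r B$ from Proposition \ref{prop21}(6) forces the inner matrix to be $\ge \vee^r I_k$, so the determinant is $\ge 1$. The induction step uses the same block decomposition as in Proposition \ref{prop34}: write $T = \begin{bmatrix} I_k & V \\ 0 & \widehat T \end{bmatrix}$ with $V = [T_{12},\ldots,T_{1n}]$, set $\widehat V = [\vee^r T_{12},\ldots,\vee^r T_{1n}]$, observe $\widehat V^*\widehat V = Q_m^r(V^*V)$ via Proposition \ref{prop21}(1), take the Schur complement to get $\det Q_n^r(T^*T) = \det\!\bigl(Q_m^r(\widehat T^*\widehat T + V^*V) - Q_m^r(V^*V)\bigr)$, apply the symmetric version of Lemma \ref{lem33} to obtain $Q_m^r(\widehat T^*\widehat T + V^*V) \ge Q_m^r(\widehat T^*\widehat T) + Q_m^r(V^*V)$, and conclude with the induction hypothesis $\det Q_m^r(\widehat T^*\widehat T) \ge 1$.

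The main (and essentially only) obstacle is bookkeeping: one must verify that the exponent produced by Proposition \ref{prop21}(3) for the symmetric power, namely $\frac{r}{k}\binom{k+r-1}{r}$, is exactly the exponent appearing in the target inequality, so that the reduction to the normalized case $T_{ii} = I_k$ truly absorbs all the determinantal weights. No new analytic input is required beyond what drives Proposition \ref{prop34}, which is presumably why the authors chose to omit the details.
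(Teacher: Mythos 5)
Your proposal is correct and is precisely the argument the authors intend: the paper omits the proof of Proposition \ref{prop36}, stating only that it follows by ``the same idea'' as Proposition \ref{prop34}, and your line-by-line substitution of $\vee^r$ for $\otimes^r$ and $Q_n^r$ for $T_n^r$ (using the symmetric-power halves of Proposition \ref{prop21} and Lemma \ref{lem33}, with the exponent $\frac{r}{k}\binom{k+r-1}{r}$ from Proposition \ref{prop21}(3)) is exactly that omitted proof.
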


We next show another extension of Fiedler-Markham's inequality similarly. 

 \begin{theorem}\label{thm37} 
Let  $H=[H_{ij}]\in \mathbb{M}_{n}(\mathbb{M}_{k})$  be positive semidefinite.  
Then for $r\in \mathbb{N}^*$  
	\begin{eqnarray}\label{eq8}
	\left(\frac{\det  [s_{r}(H_{ij})]}{ {{k+r-1}\choose {r}}^{n}}\right)^{k}\geq (\det H)^{r}.
	\end{eqnarray}  
\end{theorem}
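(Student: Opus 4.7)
The plan is to mirror the proof of Theorem \ref{thm35}, but replacing the ordinary tensor power $T_n^r$ with the symmetric tensor power $Q_n^r$, and accordingly replacing Proposition \ref{prop34} with its symmetric analogue Proposition \ref{prop36}. The role played there by the identity $\tr(\otimes^r H_{ij})=(\tr H_{ij})^r$ will here be played by the identity $\tr(\vee^r H_{ij})=s_r(H_{ij})$ from item (2) of Proposition \ref{prop21}.

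First, by Corollary \ref{coro32}, the block matrix $Q_n^r(H)=[\vee^r H_{ij}]$ lies in $\mathbb{M}_n(\mathbb{M}_{\binom{k+r-1}{r}})$ and is positive semidefinite. Hence Theorem \ref{Lin} applies to $Q_n^r(H)$ with the block size parameter $k$ replaced by $\binom{k+r-1}{r}$; using $\tr(\vee^r H_{ij})=s_r(H_{ij})$, this yields
\begin{equation*}
\left(\frac{\det[s_r(H_{ij})]}{\binom{k+r-1}{r}^{n}}\right)^{\binom{k+r-1}{r}} \;\ge\; \det Q_n^r(H).
\end{equation*}

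Second, Proposition \ref{prop36} bounds the right-hand side from below:
\begin{equation*}
\det Q_n^r(H) \;\ge\; (\det H)^{\frac{r}{k}\binom{k+r-1}{r}}.
\end{equation*}
Combining the two inequalities and extracting the $\binom{k+r-1}{r}$-th root (both sides are nonnegative) gives
\begin{equation*}
\left(\frac{\det[s_r(H_{ij})]}{\binom{k+r-1}{r}^{n}}\right) \;\ge\; (\det H)^{r/k},
\end{equation*}
and raising both sides to the $k$-th power yields exactly (\ref{eq8}). As in Theorem \ref{thm35}, one may first assume $H$ is positive definite by a standard perturbation argument so that fractional powers of $\det H$ cause no trouble, and then pass to the positive semidefinite case by continuity.

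Since Proposition \ref{prop36} is granted (the paper states it without proof, referring to the argument of Proposition \ref{prop34}), the only real work in proving Theorem \ref{thm37} is the bookkeeping: checking that the exponents $\binom{k+r-1}{r}$ and $k$ combine correctly and that $\tr\circ\vee^r$ produces the complete homogeneous symmetric polynomial $s_r$. There is no substantive obstacle — the hardest step is conceptually already done inside Proposition \ref{prop36}, whose proof mimics the induction in Proposition \ref{prop34} using the symmetric-tensor version of superadditivity from Lemma \ref{lem33}.
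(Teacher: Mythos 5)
Your proposal is correct and follows exactly the paper's own route: apply Theorem \ref{Lin} to the positive semidefinite block matrix $Q_n^r(H)$ via Corollary \ref{coro32} and the identity $\tr(\vee^r H_{ij})=s_r(H_{ij})$, then combine with Proposition \ref{prop36} and match exponents. The only difference is that you make the root-extraction and perturbation remarks explicit, which the paper leaves implicit.
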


\begin{proof}
 By  Corollary \ref{coro32} and Theorem \ref{Lin}, we obtain
  \begin{eqnarray*}
 	\left(\frac{\det [ s_r( H_{ij}) ]}{{{k+r-1}\choose {r}}^{n} }\right)^{{k+r-1 \choose r}}
= \left(\frac{\det [\tr \vee^{r}H_{ij}]}{ {{k+r-1}\choose {r}}^{n}}\right)^{{k+r-1 \choose r}} 
\geq \det Q_{n}^{r}(H),
 	\end{eqnarray*}
which together with Proposition \ref{prop36} yields  the following
 $$ \left(\frac{\det [ s_r(H_{ij}) ]}{{{k+r-1}\choose {r}}^{n} }\right)^{{k+r-1 \choose r}} 
\geq (\det H)^{ \frac{r}{k}{k+r-1\choose r}}.$$
Thus, the desired result (\ref{eq8}) follows.
\end{proof}

Clearly, when $r=1$, (\ref{eq8}) reduces to Fiedler and Markham's result (\ref{eqlin}).

\section{Extensions of Thompson's inequality}

\label{sec4}

Motivated by Theorem \ref{thm35} and Theorem \ref{thm37}, 
we apply Theorem \ref{thm11} to matrices $T_n^r(H)$ and $Q_n^r(H)$, respectively, 
and then combining with Proposition \ref{prop34} and Proposition \ref{prop36}, we have  
\begin{equation*}
\det \bigl[ \det \otimes^r H_{ij} \bigr] \ge \det T_n^r(H) \ge (\det H)^{rk^{r-1}},  
\end{equation*}
and 
\begin{equation*}
\det \bigl[ \det \vee^r H_{ij} \bigr] \ge \det Q_n^r(H) \ge (\det H)^{
\frac{r}{k}{k+r-1 \choose r}}.   
\end{equation*}
By Proposition \ref{prop21}, we get  the following extensions of Thompson's result (\ref{eq1}), 
\begin{equation} \label{eqth1}
\det  \bigl[ (\det  H_{ij})^{rk^{r-1}} \bigr]  \ge (\det H)^{rk^{r-1}},
\end{equation}
and 
\begin{equation} \label{eqth2}
\det  \Bigl[ (\det  H_{ij})^{\frac{r}{k}{k+r-1 \choose r}} \Bigr]  
\ge (\det H)^{\frac{r}{k}{k+r-1 \choose r}}. 
\end{equation}

At the end of this paper, we  present a more general setting of (\ref{eqth1}) 
and (\ref{eqth2}), i.e., we will relax the restriction of exponent,  
which also can be viewed as an extension of 
Thompson's inequality (\ref{eq1}) . 
Let $A $ and $B$ be complex matrix with the same size, 
we denote by $A\circ B$  the Hadamard product of $A,B$ 
and denote by $\circ^{r}A$ the $r$-fold Hadamard power of $A$.

\begin{theorem}\label{thm38} 
Let  $H=[H_{ij}]\in \mathbb{M}_{n}(\mathbb{M}_{k})$  be positive semidefinite. 
Then  for $r\in \mathbb{N}^*$ 
	\begin{eqnarray}\label{eq10}
	\det \bigl[ (\det H_{ij})^r \bigr] \geq (\det H)^r.
	\end{eqnarray}
\end{theorem}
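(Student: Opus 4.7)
The plan is to derive (\ref{eq10}) by applying Thompson's inequality (Theorem \ref{thm11}) to the auxiliary matrix $H \otimes I_r$. Since $H \ge 0$, item (4) of Proposition \ref{prop21} ensures that $H \otimes I_r \in \mathbb{M}_{nkr}$ is positive semidefinite; and since $nkr = n \cdot (kr)$, this matrix naturally lives in $\mathbb{M}_n(\mathbb{M}_{kr})$.

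The heart of the argument is a block-matrix identification: viewed as an $n \times n$ block matrix with blocks of size $kr \times kr$, the $(i,j)$-block of $H \otimes I_r$ is precisely $H_{ij} \otimes I_r$. This follows by tracking indices, since tensoring with $I_r$ acts only on the inner $k$-dimensional index of the block decomposition $H = [H_{ij}]$ and leaves the outer $n$-dimensional one untouched. Once this identification is made, the standard determinant formula $\det(A \otimes B) = (\det A)^m (\det B)^l$ for $A \in \mathbb{M}_l$ and $B \in \mathbb{M}_m$ yields
\[
\det(H_{ij} \otimes I_r) = (\det H_{ij})^r \quad\text{and}\quad \det(H \otimes I_r) = (\det H)^r.
\]

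Applying Theorem \ref{thm11} to the positive semidefinite block matrix $H \otimes I_r \in \mathbb{M}_n(\mathbb{M}_{kr})$ then gives
\[
\det\bigl[\,(\det H_{ij})^r\,\bigr] = \det\bigl[\det (H_{ij} \otimes I_r)\bigr] \ge \det(H \otimes I_r) = (\det H)^r,
\]
which is precisely (\ref{eq10}). The only non-routine step is the block-structure identification above; everything else is a direct invocation of results already available in the paper, so I expect no serious obstacle in carrying out the argument. A side benefit of this approach is that it makes transparent why the exponent $r$ in the statement is unrestricted, in contrast to the more rigid exponents $rk^{r-1}$ and $\tfrac{r}{k}\binom{k+r-1}{r}$ that arose in (\ref{eqth1})--(\ref{eqth2}): these came from determinants of $\otimes^{r}H_{ij}$ and $\vee^{r}H_{ij}$, whereas $H_{ij}\otimes I_{r}$ carries no symmetry constraint on $r$.
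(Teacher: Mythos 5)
Your proof is correct, but it follows a genuinely different route from the paper. The paper's own argument first applies Oppenheim's (Schur--Oppenheim) inequality to the positive semidefinite matrix $[\det H_{ij}]$, obtaining $\det\bigl(\circ^r[\det H_{ij}]\bigr)\ge\bigl(\det[\det H_{ij}]\bigr)^r$, and then invokes Thompson's inequality (\ref{eq1}); so it factors the exponent $r$ through the Hadamard power of the $n\times n$ compression. You instead push the exponent into the block matrix itself: the identification $H\otimes I_r=[H_{ij}\otimes I_r]_{i,j=1}^n$ in $\mathbb{M}_n(\mathbb{M}_{kr})$ is valid under the paper's tensor-product convention (tensoring with $I_r$ on the right refines only the inner $k$-dimensional index, so the coarse $n\times n$ block structure is preserved), and the determinant formula $\det(A\otimes B)=(\det A)^m(\det B)^l$ for $A\in\mathbb{M}_l$, $B\in\mathbb{M}_m$ then reduces (\ref{eq10}) to a single application of Theorem \ref{thm11}. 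Your version is arguably more economical in its prerequisites --- it needs neither Oppenheim's inequality nor the positive semidefiniteness of $[\det H_{ij}]$, only Thompson's inequality and elementary tensor facts --- and, as you note, it explains why the exponent $r$ is unconstrained here. The paper's route, on the other hand, is a one-line deduction once Oppenheim is granted, and its intermediate inequality $\det\bigl(\circ^r A\bigr)\ge(\det A)^r$ is of independent interest. Both proofs are complete and correct.
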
 

\begin{proof}
By Oppenheim's inequality \cite[p. 509]{HJ13}, we obtain 
\[ \det \bigl[ (\det H_{ij})^r \bigr] =\det \bigl( \circ^r [\det H_{ij}] \bigr)  
\ge \bigl(\det \,[\det H_{ij}]  \bigr)^r. \]
By (\ref{eq1}) of Theorem \ref{thm11}, we get 
\[ \bigl( \det \,[\det H_{ij}] \bigr)^r \ge (\det H)^r. \]
This completes the proof. 
\end{proof}

By taking the special case $n=2$ in (\ref{eq10}), we can easily get the following Corollary \ref{coro39}, 
which  is a generalization of Fischer's inequality  (\ref{eqfis}). 

\begin{corollary} \label{coro39}
Let  $H=[H_{ij}]\in \mathbb{M}_{2}(\mathbb{M}_{k})$ 
be positive semidefinite. Then  for $r\in \mathbb{N}^*$ 
\[ (\det H_{11} \det H_{22})^r- (\det H_{21} \det H_{12})^{r} \ge (\det H)^r.   \]
\end{corollary}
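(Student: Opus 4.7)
The plan is to specialize Theorem \ref{thm38} to the case $n=2$ and to evaluate the left-hand side of (\ref{eq10}) in closed form. When the block matrix is partitioned into $2\times 2$ blocks, the scalar matrix $[(\det H_{ij})^{r}]_{i,j=1}^{2}$ is itself of size $2\times 2$, and its determinant is simply
\[
(\det H_{11})^{r}(\det H_{22})^{r} - (\det H_{12})^{r}(\det H_{21})^{r}
= (\det H_{11}\det H_{22})^{r} - (\det H_{21}\det H_{12})^{r}.
\]
Substituting this expression into Theorem \ref{thm38} instantly yields the corollary. Concretely, the first step I would take is to write out $[(\det H_{ij})^{r}]$ as a $2\times 2$ scalar matrix, the second is to read off its determinant, and the third is to invoke (\ref{eq10}).

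There is no real obstacle here; the entire content of the corollary is the $n=2$ bookkeeping. It is, however, worth highlighting \emph{why} this deserves to be stated as a generalization of Fischer's inequality (\ref{eqfis}). Since $H\ge 0$ forces $H_{21}=H_{12}^{*}$, we have $\det H_{21}=\overline{\det H_{12}}$ and hence $\det H_{21}\cdot \det H_{12}=|\det H_{12}|^{2}\ge 0$. Consequently the subtracted term $(\det H_{21}\det H_{12})^{r}$ in the corollary is a nonnegative real number, and discarding it gives $(\det H_{11}\det H_{22})^{r}\ge (\det H)^{r}$, which upon taking $r$-th roots reduces to the $n=2$ block form of (\ref{eqfis}). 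Thus the corollary is strictly sharper than the corresponding case of Fischer's inequality, while still being a one-line consequence of Theorem \ref{thm38}.
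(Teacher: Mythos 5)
Your proposal is correct and matches the paper exactly: the corollary is obtained by taking $n=2$ in Theorem \ref{thm38} and expanding the determinant of the resulting $2\times 2$ scalar matrix. Your additional remark that $\det H_{21}\det H_{12}=|\det H_{12}|^{2}\ge 0$ (so the statement indeed sharpens Fischer's inequality) is a correct and welcome clarification, though not required for the proof itself.
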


\section*{Acknowledgments}
The first  author would like to thank Dr. Minghua Lin  
for bringing the question to his attention. 
All authors are grateful for valuable comments from the referee, 
which considerably improve the presentation of our manuscript.
This work was supported by  NSFC (Grant Nos. 11671402, 11871479),  
Hunan Provincial Natural Science Foundation (Grant Nos. 2016JJ2138, 2018JJ2479) 
and  Mathematics and Interdisciplinary Sciences Project of CSU.

\end{document}